\newtheorem{definition2}{Definition}[section]
\theoremstyle{definition}
\newtheorem{defi}[definition2]{Definition}
\newtheorem{rema}[definition2]{Remark}
\theoremstyle{plain}
\newtheorem{theo}[definition2]{Theorem}
\newtheorem{coro}[definition2]{Corollary}
\newtheorem*{clai*}{Claim}
\DeclareMathOperator{\PP}{\mathbb P}
\newcommand{\cf}{\mathrm{cf}}
\newcommand{\with}{\mid}
\DeclareMathSymbol{\shortminus}{\mathbin}{AMSa}{"39}
\newcommand{\om}{\omega}
\newcommand{\powerset}{\mathcal{P}}
\newcommand{\subs}{\subseteq}
\newcommand{\h}{\mathfrak{h}}
\newcommand{\tfrak}{\mathfrak{t}}
\newcommand{\sfrak}{\mathfrak{s}}
\newcommand{\cc}{\mathfrak{c}}
\newcommand{\fin}{\text{fin}}
\newcommand{\fresh}[1]{\textsf{FRESH}}
\newcommand{\com}[1]{\textsf{COM}}
\newcommand{\hh}[1]{\mathfrak{h}(#1)}
\newcommand{\Pom}{\powerset(\omega)/\fin}
\newcommand{\coom}[1]{\textsf{COM}^{cof}}
\newcommand{\coombase}[1]{\textsf{COM}^{base+cof}}
\newcommand{\combase}[1]{\textsf{COM}^{base}}
\newcommand{\strcom}[1]{||_\lambda^{strong}}
\begin{document}

\title{Games on base matrices}

\author{Vera Fischer}
\address{Institute of Mathematics, University of Vienna, Kolingasse 14--16, 1090 Wien, Austria}
\email{vera.fischer@univie.ac.at}

\author{Marlene Koelbing}
\address{Institute of Mathematics, University of Vienna, Kolingasse 14--16, 1090 Wien, Austria}
\email{marlenekoelbing@web.de}

\author{Wolfgang Wohofsky}
\address{Institute of Mathematics, University of Vienna, Kolingasse 14--16, 1090 Wien, Austria}
\email{wolfgang.wohofsky@gmx.at}

\thanks{\emph{Acknowledgments.} The authors would like to thank the Austrian Science Fund (FWF) for the generous support through grants Y1012, I4039 (Fischer, Wohofsky) and P28420 (Koelbing). The second author is also grateful for the support by the  \"OAW Doc fellowship.}

\subjclass[2010]{03E05, 03E17}

\keywords{base matrices; distributivity game; distributivity matrices; cardinal characteristics} 

\begin{abstract} Using a game characterization of distributivity, we show that base matrices 
for $\mathcal{P}(\omega)/\text{fin}$ 
of regular height larger than $\mathfrak{h}$ necessarily have maximal branches which are not cofinal. 
\end{abstract}

\maketitle

\section{Introduction}

A forcing $\PP$ is
\emph{$\delta$-distributive} if 
any system of $\delta$ many
maximal antichains
has a common refinement.
The \emph{distributivity} of a forcing notion~$\PP$,
denoted by~$\hh{\PP}$,
is the
least~$\lambda$
such that 
$\PP$ is not $\lambda$-distributive.
In particular, 
$\h(\Pom)$
is 
the classical cardinal characteristic $\h$. 
Note that
$\h(\PP)$
is
actually
the
least~$\lambda$
such that there is a system of $\lambda$ many
\emph{refining}
maximal antichains
without common refinement,
which
gives rise to 
the following 
definition:

\begin{defi}\label{defi:com}
We say that
$\mathcal{A}=\{A_\xi \with \xi<\lambda\}$
is a \emph{distributivity matrix for $\PP$ of height~$\lambda$}
if
\begin{enumerate}

\item[(1)] $A_\xi$ is a maximal antichain in $\PP$, for each $\xi < \lambda$,

\item[(2)] $A_\eta$ refines $A_\xi$ whenever $\eta \geq \xi$, 
i.e., for each $b \in A_\eta$ there exists $a \in A_\xi$ such that $b \leq a$, 
and

\item[(3)] there is no common refinement, i.e., there is no maximal antichain~$B$ which refines every $A_\xi$.

\end{enumerate}
\end{defi}

A special sort of distributivity matrices have
been
considered
in the seminal paper~\cite{basematrix} of Balcar, Pelant, and Simon, where $\h$ has been introduced:

\begin{defi}\label{defi:base_matrix}
A distributivity matrix $\{A_\xi \with \xi<\lambda\}$ for~$\PP$ is a \emph{base matrix} if
$\bigcup_{\xi < \lambda} A_\xi$ is dense in~$\PP$,
i.e., for each
$p \in \PP$ there is $\xi < \lambda$ and $a \in A_\xi$ such that $a \leq p$.
\end{defi}

In~\cite{basematrix}, the 
famous 
base matrix theorem 
has been shown: 
there exists a base matrix
for $\Pom$ of height $\h$. 
A more general version for a wider class of forcings has been given
in~\cite[Theorem~2.1]{basematrix_general}.

Due to its refining structure, a distributivity matrix $\{A_\xi \with \xi<\lambda\}$ can be viewed as a tree, with level $\xi$ being $A_\xi$.
Let
us say that~$\langle a_\xi \with \xi < \delta \rangle$ is a
\emph{branch 
of 
the
distributivity matrix~$\{A_\xi \with \xi<\lambda\}$} if $a_\xi \in A_\xi$ for each $\xi < \delta$, and
$a_\eta \leq a_\xi$ for each $\xi \leq \eta < \delta$.
We say that the branch is \emph{maximal} if there is no branch 
of~$\{A_\xi \with \xi<\lambda\}$
strictly extending it.
If $\delta= \lambda$, the branch $\langle a_\xi \with \xi < \delta \rangle$ is called \emph{cofinal} in $ \{A_\xi \with \xi<\lambda\}$.

The structure of base matrices for $\Pom$ has been investigated in the literature. 
Note that each maximal branch 
of 
a distributivity matrix for $\Pom$ which is not cofinal is a tower. So if there are no towers of length strictly less than~$\h$, i.e., if $\tfrak = \h$, all maximal branches of a distributivity matrix of height~$\h$ are cofinal.
On the other hand, Dow showed 
that in the Mathias model,
there exists a base matrix of
height~$\h$
without cofinal branches (see~\cite[Lemma~2.17]{Dow_tree_pi_bases}).
It is actually consistent that
\emph{no} 
base matrix of height~$\h$
has cofinal branches.
This
was
proved by Dordal
by constructing
a model in which $\h$ does not belong to the tower spectrum (see \cite{Dordal_model} 
or\footnotemark~\cite[Corollary~2.6]{Dordal_tower_spectrum}).

\footnotetext{Dordal's original model 
(in which $\cc = \om_2$) is presented in~\cite{Dordal_model}, 
whereas 
\cite[Corollary~2.6]{Dordal_tower_spectrum} 
is 
a more general
result 
which also 
gives models satisfying $\h = \cc > \om_2$ 
(but is, interestingly enough,
easier to prove).}

In \cite{our_COM}, the authors of this paper have shown that consistently there exists a distributivity matrix 
for $\Pom$ 
of regular height larger than $\h$ in which all maximal branches are cofinal. 

In \cite{Brendle_Base}, Brendle has shown that 
if 
$\lambda\leq\cc$ 
is regular and greater or equal than the splitting number~$\sfrak$ (or,  alternatively, there exists no strictly $\subseteq^*$-decreasing sequence of length $\lambda$), then there exists a base matrix 
for $\Pom$ 
of height~$\lambda$. In particular, there always exists a base matrix of height~$\cc$ provided that $\cc$ is regular. 
He mentions that in the Cohen and random models base matrices of height larger than $\h$ necessarily have maximal branches which are not cofinal (in fact, there are no strictly $\subseteq^*$-decreasing sequences of length larger than $\omega_1$).

We will show below 
that,
in ZFC, 
any base matrix 
for $\Pom$ 
of regular 
height larger than $\h$ has maximal branches which are not cofinal.

\section{Main Result}

In the proof of Theorem~\ref{theo:theta_Beweis}, 
we will use a game characterization of being $\lambda$-distributive. 
It generalizes the game characterization for being $\om$-distributive which can be found in~\cite[Lemma~30.23]{Jech}.

\begin{defi}
Let $\PP$ be a forcing notion.
Let $G_\lambda(\PP)$ denote the
\emph{$\lambda$-distributivity game}
(which has
length~$\lambda$):
\begin{center}
\begin{tabular}[h]{l|llllllllllll}
I 	& $a_0$ 	& 			& $a_1$ 	&    		&  $\dots$ 		&			&			&$a_{\mu+1}$& 				&$\dots$& &  \\
\hline
II 	&  			& $b_0$ 	& 			& $b_1$ 	&   				& $\dots$ 	&$b_\mu$	&				& $b_{\mu+1}$		& 		& $\dots$&	\\
\end{tabular}
\end{center}
The players alternately pick conditions 
in $\PP$
such that the resulting sequence is decreasing, i.e.,
$b_j\leq a_i$ and $a_{i+1}\leq b_i$ for every $i\leq j <\lambda$.  Player~I starts the game, and at limits $\mu$, Player~II has to play. If Player~II cannot play at limits (because the sequence played till then has no lower bound),
the game ends and
Player~I wins immediately.
If the game continuous for $\lambda$ many steps, Player~II wins if and only if there exists
a 
$b\in \PP$ with $b\leq a_i$ for every successor $i<\lambda$.
\end{defi}

Recall that, by definition, a forcing $\PP$ is
\emph{${\leq}\lambda$-strategically closed} if Player~II has a winning strategy in $G_\lambda(\PP)$.
A slightly weaker property turns out to be equivalent to being 
$\lambda$-distributive;  
this was shown by Foreman in 
\cite[Theorem on page~718]{Foreman}:

\begin{theo}\label{theo:game}
 The following are equivalent:
\begin{enumerate}
\item Player~I has no winning strategy in the game $G_\lambda(\PP)$.
\item $\PP$ is $\lambda$-distributive.
\end{enumerate}
\end{theo}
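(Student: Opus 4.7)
We argue both directions by contraposition.

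\textbf{$(1)\Rightarrow(2)$:} Suppose $\PP$ is not $\lambda$-distributive. By the remark following Definition~\ref{defi:com}, fix a refining system $\{A_\xi:\xi<\lambda\}$ of maximal antichains in $\PP$ without common refinement; equivalently, $D:=\{q\in\PP:\forall\xi<\lambda\;\exists c\in A_\xi\,(q\leq c)\}$ fails to be dense, so fix $p$ with $D\cap\PP\!\restric\!p=\emptyset$. I would then let Player~I play, at round~$0$, any $a_0\leq p$ that also lies below some $c_0\in A_0$, and at each round $\xi+1$ any $a_{\xi+1}\leq b_\xi$ that also lies below some $c_{\xi+1}\in A_{\xi+1}$; these choices are always available by the maximality of the $A_\xi$. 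Should II survive all $\lambda$ rounds and produce a lower bound $b$ of the $a_i$ for successor~$i$, then $b\leq a_1\leq p$, and for every $\xi<\lambda$, picking any successor $\eta>\xi$ and using that $A_\eta$ refines $A_\xi$ yields some $c\in A_\xi$ with $b\leq c_\eta\leq c$; thus $b\in D\cap\PP\!\restric\!p$, a contradiction. Hence I wins with this strategy.

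\textbf{$(2)\Rightarrow(1)$:} Suppose $\sigma$ is a winning strategy for~I, and set $p_0:=\sigma(\emptyset)$. The plan is to construct, by transfinite recursion on $\xi<\lambda$, a refining system $\{A_\xi:\xi<\lambda\}$ of maximal antichains in $\PP\!\restric\!p_0$, together with an assignment $a\mapsto\tau_\xi(a)$ attaching to each $a\in A_\xi$ a partial play against $\sigma$ in which II has made $\xi+1$ moves with II's $\xi$-th move equal to $a$, coherently so that $\tau_\xi(a)$ end-extends $\tau_\eta(a')$ whenever $a\leq a'\in A_\eta$ and $\eta<\xi$. Successor stages refine below each Player~I response $\sigma(\tau_\xi(a))\leq a$ via a maximal antichain of possible II-moves; limit stages $\mu$ glue plays along coherent branches and use $\mu$-distributivity (a consequence of $\lambda$-distributivity) to secure sufficiently many lower bounds to serve as II's $\mu$-th moves. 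Given any putative common refinement $B$ of $\{A_\xi\}_{\xi<\lambda}$, each $b\in B$ determines a coherent branch $\langle c_\xi^b\rangle$ through the antichains with $b\leq c_\xi^b$ for every $\xi$; gluing the associated plays $\tau_\xi(c_\xi^b)$ yields a full play of length $\lambda$ against $\sigma$ in which $b$ is a common lower bound of II's moves, so II wins against $\sigma$ — contradicting the hypothesis. Thus $\{A_\xi\}$ witnesses the failure of $\lambda$-distributivity of $\PP\!\restric\!p_0$, and hence of $\PP$.

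\textbf{Main obstacle.} The crux of $(2)\Rightarrow(1)$ is the limit-stage construction together with keeping each $A_\xi$ maximal in $\PP\!\restric\!p_0$ while every $a\in A_\xi$ remains \emph{alive}, i.e., carries a play. At a successor stage the response $\sigma(\tau_\xi(a))$ may lie strictly below $a$, leaving a residue $a\setminus\sigma(\tau_\xi(a))$ not addressed by alive refinements; bridging this coherently — whether by controlled modification of earlier II-moves within the plays, by restricting to a cofinal subposet, or via a careful dead/alive dichotomy — is exactly where $\lambda$-distributivity must be invoked. The same distributivity is then needed to ensure that any purported common refinement $B$ contains a condition whose entire branch is alive, so that the final gluing argument produces the contradiction.
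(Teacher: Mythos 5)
The paper does not actually prove this equivalence: it quotes it from Foreman (the ``Theorem on page 718'' citation), so there is no in-paper argument to compare yours against, and I can only assess your proposal on its own terms. Your direction $(1)\Rightarrow(2)$ is essentially the standard argument and is correct in outline, with one imprecision at the very first step: from the failure of $\lambda$-distributivity you cannot in general extract a \emph{refining} system of length exactly $\lambda$ without common refinement (at a limit stage $\geq\h(\PP)$ the required antichain would itself have to be a common refinement of an unrefinable initial segment, so such a system need not exist). The standard repair is to set $\mu=\h(\PP)\leq\lambda$, fix a refining unrefinable system of length $\mu$, run your strategy for the first $\mu$ rounds, and observe that Player~II is then already stuck at the limit stage $\mu$, since any lower bound of $\langle a_i\with i<\mu\rangle$ would lie in $D\cap\PP\restric p=\emptyset$; hence Player~I wins $G_\lambda(\PP)$ anyway.

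The genuine gap is in direction $(2)\Rightarrow(1)$, which is the actual content of Foreman's theorem, and your write-up stops exactly where the work begins. You correctly isolate the obstacle: after Player~I answers $\sigma(\tau_\xi(a))\leq a$, the conditions below $a$ that are incompatible with $\sigma(\tau_\xi(a))$ carry no play, so one cannot naively form a maximal antichain of ``alive'' conditions below $p_0$ by refining below the $\sigma$-responses; and at limits one must produce lower bounds for the glued plays to keep enough conditions alive. But you then list three candidate techniques (modifying earlier moves, passing to a cofinal subposet, a dead/alive dichotomy) without executing any of them, and you never establish the density fact on which everything hinges: that for each $\xi$ the set of conditions occurring as a legal $\xi$-th move of Player~II in \emph{some} play consistent with $\sigma$ is dense below $\sigma(\emptyset)$ (for successor $\xi$ this follows by having II steer the play through an arbitrarily prescribed condition at an earlier round; for limit $\xi$ it requires an inductive appeal to distributivity for shorter lengths), nor how to choose the witnessing plays coherently along branches so that a common refinement yields a single run of length $\lambda$ defeating $\sigma$. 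As written, this direction is a plan rather than a proof, and the missing step is precisely the nontrivial part of the theorem.
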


We now show that a base matrix 
for~$\Pom$ 
of regular height larger 
than~$\h$ necessarily
has 
(through every node)
branches which are dying out 
(see Corollary~\ref{coro:theta_Beweis_Pom}).
In fact, we show the following more general theorem:

\begin{theo}\label{theo:theta_Beweis}
Let $\mathcal{A}=\{A_\xi \with \xi<\lambda\}$ 
be a base matrix 
for~$\PP$ 
of
regular
height $\lambda > \h(\PP)$. 
Then
there is a maximal branch 
of~$\mathcal{A}$ which is not cofinal.
\end{theo}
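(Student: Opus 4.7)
The plan is to argue by contradiction using the game characterisation (Theorem~\ref{theo:game}). Suppose every maximal branch of $\mathcal{A}$ is cofinal; equivalently, every branch of $\mathcal{A}$ of length $<\lambda$ can be extended by at least one further node, and by an obvious transfinite recursion (applying the same hypothesis at intermediate limits) can in fact be extended to any prescribed length $<\lambda$. Since $\PP$ is not $\h(\PP)$-distributive, Theorem~\ref{theo:game} yields a winning strategy $\sigma$ for Player~I in the game $G_{\h(\PP)}(\PP)$. I will construct a play against $\sigma$ which Player~II nevertheless wins, contradicting the choice of $\sigma$.

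Player~II's strategy is to stay inside the base matrix: I inductively produce moves $b_\nu \in A_{\eta_\nu}$ tracing a single branch of $\mathcal{A}$, with $\eta_\nu<\lambda$ strictly increasing. Whenever Player~I plays $a_{\mu+1}\le b_\mu$, density of $\bigcup_\xi A_\xi$ in $\PP$ supplies some $b'\le a_{\mu+1}$ lying in some $A_\zeta$. If $\zeta>\eta_\mu$ I set $b_{\mu+1}:=b'$; otherwise I invoke the extension hypothesis to push $b'$ further down into $A_{\eta_\mu+1}$ and take $b_{\mu+1}$ to be this deeper node. Either way $b_{\mu+1}\le a_{\mu+1}\le b_\mu$, and the antichain property of $A_{\eta_\mu}$ forces the ancestor of $b_{\mu+1}$ at level $\eta_\mu$ to be exactly $b_\mu$, so the branch really does extend. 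At a limit stage $\mu$ I set $\delta_\mu:=\sup_{\nu<\mu}\eta_\nu$ and apply the extension hypothesis to find some $b\in A_{\delta_\mu}$ below every earlier $b_\nu$; this $b$ is automatically below every earlier $a_i$ as well and serves as Player~II's move $b_\mu$.

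After $\h(\PP)$ many rounds, the $b_\nu$'s trace a branch of $\mathcal{A}$ of total length $\delta:=\sup_{\nu<\h(\PP)}\eta_\nu$. The hypotheses on $\lambda$ do their decisive work here: because $\h(\PP)<\lambda$ and $\lambda$ is regular, $\delta<\lambda$, so the extension hypothesis applies one final time to yield $b\in A_\delta$ below the entire branch, and hence below every $a_i$ played by Player~I. This $b$ witnesses a win for Player~II, contradicting that $\sigma$ was winning.

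The principal obstacle is engineering a genuine \emph{win}, not merely having Player~II survive every limit. If one instead played the longer game $G_\lambda(\PP)$, the levels $\eta_\nu$ would typically be cofinal in $\lambda$ and the resulting branch cofinal, so no final application of the extension hypothesis would be available to manufacture a lower bound. Playing the shorter game $G_{\h(\PP)}(\PP)$ against regular $\lambda>\h(\PP)$ keeps $\sup\eta_\nu$ strictly below $\lambda$, which is precisely what makes the concluding step work. A secondary point of care is the push-down step at successors: density alone lands us in \emph{some} level of the matrix but not necessarily strictly below $\eta_\mu$, so the extension hypothesis must be invoked there too.
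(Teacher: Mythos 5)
Your proposal is correct and takes essentially the same route as the paper: run $G_{\h(\PP)}(\PP)$ against a winning strategy for Player~I, have Player~II answer with elements of the matrix, and use the regularity of $\lambda>\h(\PP)$ together with the assumption that all maximal branches are cofinal to produce lower bounds in the matrix at limits and at the final stage. The only difference is bookkeeping: you maintain a single branch with strictly increasing levels during the play (pushing down at successors), whereas the paper lets Player~II pick arbitrary matrix elements and extracts the strictly increasing level sequence afterwards inside its Claim.
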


\begin{proof}
Assume towards contradiction that
every maximal branch 
is cofinal.
By definition, 
$\PP$ is \emph{not} $\h(\PP)$-distributive.
Therefore, using the game characterization of distributivity (see Theorem~\ref{theo:game}),
Player~I has a winning strategy $\sigma$ in
the game~$G_{\h(\PP)}(\PP)$.

Consider the following run of the game of (full) 
length~$\h(\PP)$:
\begin{center}
\begin{tabular}[h]{l|lllllllllll}
I 	& $b_0$ 	& 			& $b_1$ 	&    		&  $\dots$ 		&			&			&$b_{\mu+1}$& 				&$\dots$ & \\
\hline
II 	&  			& $a_0$ 	& 			& $a_1$ 	&   				& $\dots$ 	&$a_\mu$	&				& $a_{\mu+1}$ 		& 	& $\dots$		\\
\end{tabular}
\end{center}
where Player~I plays according to $\sigma$
(i.e.,
$b_0 = \sigma(\langle \rangle)$
and
$b_{i+1} =
\sigma(\langle b_0, a_0, \dots, a_i \rangle)$
for each $i < \h(\PP)$),
and Player~II plays as follows (where the $a_i$ are going to be in the matrix for each $i < \h(\PP)$). For successors $i$ (and for $i = 0$), let 
$a_{i} \leq b_i$ with
$a_{i}$ in the matrix; this is possible,
because
it
is a base matrix.
For
limit~$\mu\leq \h(\PP)$,
the following holds by induction:
$\langle a_i \with i < \mu \rangle$
is
a $\leq$-decreasing sequence 
such that
$a_i$ is in the matrix for each~$i < \mu$.
So (for $\mu < \h(\PP)$)
Player~II can play
a lower bound~$a_\mu$ in the matrix,
by the following claim.

\begin{clai*}
The sequence $\langle a_i \with i < \mu \rangle$
has a lower bound  
in the matrix.
\end{clai*}

\begin{proof}
We can assume that the sequence is not eventually 
constant. 
Moreover, we can assume that it is 
strictly decreasing.
It is easy to check that there is a strictly increasing
sequence
$\langle \xi_i \with i < \mu \rangle \subs \lambda$
with
$a_i \in A_{\xi_i}$ for each $i < \mu$.
Then
$\sup(\{\xi_i \with i < \mu \}) < \lambda$, because
$\mu \leq \h(\PP) < \lambda$ and $\lambda$ is regular.
So the corresponding branch is not cofinal in the matrix, hence it is not maximal by assumption.
Consequently, there
exists an~$a$ in the matrix such that 
$a \leq a_i$ for each $i < \mu$.
\end{proof}

Finally, for $\mu = \h(\PP)$, the claim yields a 
lower bound of
$\langle a_i \with i < \h(\PP) \rangle$,
witnessing that
Player~II wins this run of the game.
This contradicts that $\sigma$ is a winning strategy for Player~I.
\end{proof}

\begin{rema}
In the above theorem, 
the assumption that $\lambda>\h(\PP)$ can be replaced by the
weaker assumption
that
$\PP$ is,
for some
$\nu<\lambda$,
not ${\leq}\nu$-strategically closed.
In the proof,
one can
turn
(using a well-order on the base matrix)
the description of the moves of
Player~II
into a strategy for Player~II, which is then a winning strategy.

Also, note that the proof still works 
for singular~$\lambda$ as long as 
$\cf(\lambda) > \h(\PP)$  
(or if $\PP$ is not ${\leq}\nu$-strategically closed 
for some
$\nu < \cf(\lambda)$).
\end{rema}

For the important case of~$\Pom$, we can now derive the following:

\begin{coro}\label{coro:theta_Beweis_Pom}
Let $\mathcal{A}=\{A_\xi \with \xi<\lambda\}$ be a base matrix 
for $\Pom$ 
of
regular
height~$\lambda > \h$.
Then
for every $a\in \bigcup_{\xi <\lambda} A_\xi$, 
there is a maximal branch of~$\mathcal{A}$ containing~$a$ which is not cofinal.
\end{coro}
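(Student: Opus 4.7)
The plan is to reduce to Theorem~\ref{theo:theta_Beweis} by restricting the base matrix to the cone below~$a$. Fix $\xi_0<\lambda$ with $a\in A_{\xi_0}$, and for each $\xi_0\leq \xi<\lambda$ set
$A_\xi^a := \{b\in A_\xi \with b\leq a\}$.
The first step would be to verify that, after reindexing as $\{A_{\xi_0+\eta}^a \with \eta<\lambda\}$ (which has order type $\lambda$ since $\lambda$ is regular uncountable and $\xi_0<\lambda$), the family $\mathcal{A}':=\{A_\xi^a \with \xi_0\leq \xi<\lambda\}$ is a base matrix for the sub-forcing of $\Pom$ below~$a$. Since that sub-forcing is isomorphic to $\Pom$, its distributivity is again~$\h$, so $\mathcal{A}'$ has regular height~$\lambda>\h$, and Theorem~\ref{theo:theta_Beweis} applies.

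This verification is the one (minor) technical obstacle I anticipate. The key observation is that if $\xi\geq \xi_0$ and $b\in A_\xi$ is compatible with~$a$, then $b\leq a$: indeed, refinement gives $a'\in A_{\xi_0}$ with $b\leq a'$, and the antichain property of $A_{\xi_0}$ together with compatibility of $a$ and $a'$ (witnessed by any common extension of $a$ and $b$) forces $a=a'$. Using this, both density of $\bigcup_\xi A_\xi^a$ below~$a$ and maximality of each $A_\xi^a$ as an antichain below~$a$ reduce immediately to the corresponding properties of~$\mathcal{A}$; the refinement property of $\mathcal{A}'$ follows by the same observation applied once more.

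Applying Theorem~\ref{theo:theta_Beweis} to $\mathcal{A}'$ then yields a maximal branch of $\mathcal{A}'$ indexed by $\xi_0\leq \xi<\delta$ for some $\delta<\lambda$. Since $A_{\xi_0}^a=\{a\}$, this branch automatically starts at~$a$. Finally, I would extend it to indices $\xi<\xi_0$ by taking, for each such $\xi$, the unique element $a_\xi\in A_\xi$ with $a\leq a_\xi$ (existing by maximality of $A_\xi$ and unique by its antichain property); this produces a branch of $\mathcal{A}$ through~$a$ of length $\delta<\lambda$, hence not cofinal. Maximality transfers from $\mathcal{A}'$ to $\mathcal{A}$ because any $a_\delta\in A_\delta$ strictly extending it would satisfy $a_\delta\leq a_{\xi_0}=a$ and thus belong to $A_\delta^a$, contradicting the maximality of the chosen branch in $\mathcal{A}'$.
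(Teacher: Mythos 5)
Your proposal is correct and follows essentially the same route as the paper: restrict $\mathcal{A}$ to the cone below~$a$, observe that $\Pom$ below~$a$ is again $\Pom$ (so its distributivity is~$\h$), and apply Theorem~\ref{theo:theta_Beweis}. You simply spell out the verification that the restricted family is a base matrix and that maximality and non-cofinality transfer back to~$\mathcal{A}$, details the paper leaves implicit.
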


\begin{proof}
Fix~$a$ in the matrix
(i.e., $a \in \bigcup_{\xi <\lambda} A_\xi$). 
Let $\PP := \{ b \with b \subs^* a \}$ be the part of $\Pom$ below~$a$. 
Recall that $\Pom$ is homogenous, hence $\h(\PP)=\h(\Pom)=\h$. 
Note that the part of $\mathcal{A}$ below~$a$ is a base matrix for $\PP$ of height~$\lambda$. Therefore, by Theorem~\ref{theo:theta_Beweis}, 
there is 
a maximal branch 
of~$\mathcal{A}$ containing~$a$
which is not cofinal. 
\end{proof}

So the above theorem actually says that distributivity matrices 
for $\Pom$ 
of regular height 
larger than~$\h$ cannot 
simultaneously 
have only cofinal maximal branches and be a base matrix. 
Therefore, Brendle's theorem from~\cite{Brendle_Base} together with the above theorem shows that there are distributivity matrices 
for $\Pom$ 
of regular height larger than~$\h$ with maximal branches which are not cofinal (at least if $\cc > \h$ is regular or $\sfrak < \cc$). 
On the other hand, the above theorem shows that the generic distributivity matrix of regular height larger than~$\h$ from~\cite{our_COM} cannot be a base matrix because all its maximal branches are cofinal (this can also be seen by analyzing the forcing construction, see the end of \cite[Section 7.1]{our_COM}).

Further note that 
in the model from~\cite{our_COM}, 
there are both kinds of distributivity matrices 
for $\Pom$ 
of regular height larger than~$\h$: 
matrices all whose maximal branches are cofinal, and 
matrices with maximal branches which are not cofinal.

\bibliography{bib_files_Vera_h}
\bibliographystyle{alpha}

\end{document}